\documentclass[11pt]{amsart}
\usepackage{amssymb}
\usepackage{ytableau}
\usepackage{mathtools,color}
\usepackage{amsrefs}
\usepackage{tikz}
\usetikzlibrary{arrows.meta}
\usepackage{fullpage}
\setlength{\footskip}{30pt}

\theoremstyle{plain}
\newtheorem{theorem}{Theorem}[section]
\newtheorem{prop}[theorem]{Proposition}
\newtheorem{lemma}[theorem]{Lemma}
\newtheorem{corollary}[theorem]{Corollary}

\theoremstyle{definition}

\newtheorem{rem}[theorem]{Remark}

\newtheorem{ex}[theorem]{Example}

\numberwithin{equation}{section}

\newcommand{\C}{\mathcal{C}}
\newcommand{\EMD}{{\rm EMD}}
\newcommand{\Par}{{\rm Par}}
\newcommand{\smtriangle}{\scalebox{.8}{$\,\triangle\,$}}
\newcommand{\Stri}{{\rm S}_{_{\!\triangle\!\!}}}

\begin{document}

\title{
Palindromicity of the numerator \\ of a statistical generating function}

\author{Rebecca Bourn}
\address{
Rebecca Bourn\\
Department of Mathematical Sciences\\
University of Wisconsin--Milwaukee \\ 
3200 N.~Cramer St.\\
Milwaukee, WI 53211} 
\email{bourn@uwm.edu}

\author{William Q. Erickson}
\address{
William Q.~Erickson\\
Department of Mathematics\\
Baylor University \\ 
One Bear Place \#97328\\
Waco, TX 76798} 
\email{will\_erickson@baylor.edu}

\begin{abstract}
    We prove a conjecture of Bourn and Willenbring (2020) regarding the palindromicity and unimodality of a certain family of polynomials $N_n(t)$.
    These recursively defined polynomials arise as the numerators of generating functions in the context of the discrete one-dimensional earth mover's distance (EMD).
    The key to our proof is showing that the defining recursion can be viewed as describing sums of symmetric differences of pairs of Young diagrams; in this setting, palindromicity is equivalent to the preservation of the symmetric difference under the transposition of diagrams.
    We also observe a connection to recent work by Defant et al.~(2024) on the Wiener index of minuscule lattices, which we reinterpret combinatorially to obtain explicit formulas for the coefficients of $N_n(t)$ and for the expected value of the discrete EMD.
    
\end{abstract}

\subjclass[2020]{Primary 05A15;
Secondary 11B37,
05A17,
05C09}

\keywords{Palindromic polynomials, unimodal polynomials, generating functions, recursions, earth mover's distance,  Wiener index, minuscule lattices}

\maketitle

\section{Introduction}

\subsection{Background}

In this note, we prove a conjecture on the palindromicity and unimodality of the numerators of a certain family of statistical generating functions, by reinterpreting them in terms of Young diagrams.

In \cite{BW}, Bourn and Willenbring derive a recursive formula for the expected value of the one-dimensional earth mover's distance (EMD).
For the purposes of this note, it is enough to know that the EMD (also called the first Wasserstein distance) measures the distance between two histograms (or more generally, probability distributions), by computing the minimum amount of ``work'' required to transform one into the other (see~\cite{Rubner}).
The EMD can also be viewed as the solution to the classical \emph{transportation problem} (often named after various combinations of Hitchcock, Monge, Kantorovich, and Koopmans), and has an ever-widening range of important applications in mathematics along with the physical and social sciences.
We recommend Villani's monumental reference~\cite{Villani} for further reading.

While the main result in~\cite{BW} extends to probability distributions on the set $[n] \coloneqq \{1, \ldots, n\}$, the authors first consider discrete histograms with $n$ bins and $s$ data points --- in other words, the set of (weak) integer \emph{compositions} of $s$ into $n$ parts, denoted by
\[
    \C(s,n) \coloneqq \Big\{ (\alpha_1, \ldots, \alpha_n) \in (\mathbb{Z}_{\geq 0})^n \; \Big| \; \textstyle\sum_i \alpha_i = s \Big\}.
\]
Then, temporarily allowing ordered pairs of compositions with different numbers of parts ($p$ and $q$), they define a bivariate generating function
\begin{equation}
    \label{Hpq}
    H_{pq}(z,t) \coloneqq \sum_{s=0}^\infty \Bigg(\sum_{\substack{\alpha \in \C(s,p), \\ \beta \in \C(s,q)\phantom{,}}} z^{\EMD(\alpha,\beta)}\Bigg)t^s
\end{equation}
which tracks the EMD values while summing over all possible numbers $s$ of data points.
Upon differentiating with respect to $z$ and evaluating at $z=1$, they obtain another generating function in $t$, which is the subject of the present note:
\begin{align}
    H'_{pq}(t) &\coloneqq \frac{\partial}{\partial z} H_{pq}(z,t)\bigg|_{z=1} \nonumber \\
    &= \sum_{s=0}^\infty \Bigg(\sum_{\substack{\alpha \in \C(s,p), \\ \beta \in \C(s,q)\phantom{,}}} \hspace{-2ex} \EMD(\alpha,\beta)\Bigg) t^s. \label{H prime}
\end{align}
It is then shown~\cite{BW}*{Prop.~6} that this generating function has the rational form
\[
    H'_{pq}(t) = \frac{N_{pq}(t)}{(1-t)^{p+q}},
\]
where $N_{pq}(t)$ is a recursively defined polynomial (see Section~\ref{sub:Npq recursion} below).

\subsection{Main result}

In practice, one is interested only in the special case $p=q$, in which case we replace both parameters by $n$, and write $H'_n(t) \coloneqq H'_{nn}(t)$ and $N_{n}(t) \coloneqq N_{nn}(t)$.
The main result in this note is a proof of the following theorem, which was conjectured in~\cite{BW}*{Conj.~1}:

\begin{theorem}
\label{thm:main conjecture}
For all positive integers $n$, the polynomial $N_n(t)$ is palindromic and unimodal.
\end{theorem}

Given a polynomial $f(t) = \sum_{k=a}^b f_k t^k$ with total degree $d \coloneqq a+b$ (assuming $f_a \neq 0$ and $f_b \neq 0$), we say that $f(t)$ is \emph{palindromic} if $f_k = f_{d-k}$ for all $a \leq k \leq b$.
This is equivalent to the condition $f(t) = t^d f(1/t)$.
Moreover, $f(t)$ is said to be \emph{unimodal} if
\[
f_a \leq f_{a+1} \leq \cdots \leq f_m \geq \cdots \geq f_{b-1} \geq f_b
\]
for some index $m$.
The key to our proof of palindromicity of the polynomials $N_n(t)$ is a combinatorial interpretation of the recursion defining $N_{pq}(t)$, in terms of the symmetric difference of pairs of Young diagrams bounded by certain rectangles (see Theorem~\ref{thm:main result}).
The palindromic property of $N_n(t)$ then becomes clear from the fact that the size of the symmetric difference is unchanged by reflecting both diagrams (see Corollary~\ref{cor:main}).

\subsection{Connections to other work}

In Section~\ref{sec:Defant} we point out a surprising connection to a recent paper by Defant et al.~\cite{Defant}.
In particular, the distance defined on a minuscule lattice of Type A measures the symmetric difference between Young diagrams, and the \emph{Wiener index} is the sum of all these pairwise distances.
By aligning this fact with our combinatorial view in Theorem~\ref{thm:main result}, we realize that the Wiener index formula in~\cite{Defant} yields the coefficients of $N_n(t)$, giving an explicit description of these once-mysterious polynomials (see Theorem~\ref{thm:coeffs of Nn}).
This leads to a straightforward proof (Corollary~\ref{cor:unimodal}) of the unimodality of $N_n(t)$, thereby completing the proof of Theorem~\ref{thm:main conjecture}.

As it turns out, the Wiener index in~\cite{Defant} gives the explicit coefficient formula not only for the numerator $N_n(t)$, but also for the series expansion $H'_{n}(t)$.
This is sufficient to close the recursion in~\cite{BW} for the expected value of the one-dimensional EMD on $\C(s,n) \times \C(s,n)$, and the closed form is surprisingly simple (see Theorem~\ref{thm:EMD exp val}):
\begin{equation}
\label{EMD exp val}
    \mathbb{E}\Big[\EMD(\alpha,\beta)\Big] = \frac{s(n-1)}{4s+4n-2} \cdot \frac{\binom{2s+2n}{2s+1}}{\binom{s+n-1}{s}^2}.
\end{equation}
We emphasize that this result complements two similar extensions of the problem from the original paper~\cite{BW}.
First, the authors of~\cite{FV} derive a non-recursive formula for the expected value of the EMD, but on pairs of probability distributions rather than discrete histograms.  (See our Remark~\ref{rem FV}, where we recover this result by taking a single limit.)
Second, the authors of~\cite{EK23} present a non-recursive approach in the discrete setting, which is less direct than~\eqref{EMD exp val} but also more flexible.
Specifically, their formula gives the entries of a certain matrix, which (upon taking the trace of its product with an arbitrary Monge cost matrix $C$) yields the expected value of the modified EMD determined by $C$.

\subsection*{Acknowledgments}

We thank the anonymous referees for their careful reading and helpful comments on the manuscript.

\section{Combinatorial preliminaries}

\subsection{Recursive definition of $N_{pq}(t)$}
\label{sub:Npq recursion}

Recall from~\eqref{Hpq} the generating function $H_{pq}(z,t)$, and consider the specialization evaluated at $z=1$:
\begin{equation}
    \label{Hpq(1)}
    H_{pq}(1,t) = \sum_{s = 0}^\infty |\C(s,p)| \cdot |\C(s,q)| \cdot t^s.
\end{equation}
In~\cite{BW} this is rewritten in the rational form
\[
    H_{pq}(1,t) = \frac{W_{pq}(t)}{(1-t)^{p+q-1}},
\]
where the coefficients of the numerator are well known to be given by
    \begin{equation}
    \label{Wpq}
    W_{pq}(t) = \sum_{k=0}^{\min\{p,q\}-1} \binom{p-1}{k} \binom{q-1}{k} t^k.
    \end{equation}
By first proving a recursion for $H_{pq}(t)$ and then manipulating generating functions, the authors of~\cite{BW} derive the following recursive definition of $N_{pq} \coloneqq N_{pq}(t)$ in their equation (4.3):
\begin{equation}
    \label{Npq recursion}
    N_{pq} = N_{p-1,q} + N_{p,q-1} - (1-t)N_{p-1,q-1} + |p-q| \cdot t \, W_{pq},
\end{equation}
with initial values $N_{0,q} = N_{p,0} = N_{1,1} = 0$.
Below we list the polynomials $N_n(t) \coloneqq N_{nn}(t)$ for the first few values of $n$:
\begin{align*}
    N_1(t) &= 0,\\
    N_2(t) &= 2t,\\
    N_3(t) &= 8t + 8t^2,\\
    N_4(t) &= 20t + 56t^2 + 20t^3,\\
    N_5(t) &= 40t + 216t^2 + 216t^3 + 40t^4,\\
    N_6(t) &= 70 t + 616 t^2 + 1188 t^3 + 616 t^4 + 70 t^5, \\
    N_7(t) &= 112 t + 1456 t^2 + 4576 t^3 + 4576 t^4 + 1456 t^5 + 112 t^6,\\
    N_8(t) &= 168 t + 3024 t^2 + 14040 t^3 + 22880 t^4 + 14040 t^5 + 3024 t^6 + 
 168 t^7.
\end{align*}

\begin{rem}
In~\cite{EW04}*{eqn.~\!(1.4.4)}, the series $H_{pq}(1,t)$ is shown to be the Hilbert series of the first Wallach representation of the Lie algebra $\mathfrak{su}(p,q)$.
It can also be viewed as the Hilbert series of the determinantal variety consisting of $p \times q$ matrices of rank at most 1; or again, as the Hilbert series of the Stanley--Reisner ring of the order complex $\Delta_{pq}$ on the poset $[p] \times [q]$.
In this last context, the coefficient of $t^k$ in the numerator $W_{pq}(t)$ equals the number of facets of $\Delta_{pq}$ whose restrictions have cardinality $k$; equivalently, this is the number of lattice paths from $(1,1)$ to $(p,q)$ containing exactly $k$ ``turns'' from the north to the east.
This is a special case of lattice path enumeration as detailed in~\cite{Krattenthaler}*{Prop.~28 and Fig.~8}, for example.
See also~\cite{Billera}, which introduced these methods in the context of the harmonic oscillator.
\end{rem}

\subsection{Symmetric difference of partitions}

A \emph{partition} $\lambda = (\lambda_1, \ldots, \lambda_r)$ is a weakly decreasing sequence of positive integers.
We call $r$ the \emph{length} of $\lambda$, and we call $\lambda_1$ the \emph{width} of $\lambda$.
These terms are quite natural when we identify a partition $\lambda$ with its \emph{Young diagram}: the array of boxes, justified along the left edge, in which the row lengths from top to bottom are $\lambda_1, \ldots, \lambda_r$.
We write $\lambda'$ to denote the \emph{conjugate} partition obtained by reflecting the Young diagram $\lambda$ across the main diagonal.
For example, if $\lambda = (6,5,2,2,1)$, then we have
\[
    \ytableausetup{centertableaux,boxsize=6pt}
    \lambda = (6,5,2,2,1) = \ydiagram{6,5,2,2,1}, \qquad 
    \lambda' = (5,4,2,2,2,1) = \ydiagram{5,4,2,2,2,1},
\]
where $\lambda$ has length $5$ and width $6$, while $\lambda'$ has length $6$ and width $5$.

For nonnegative integers $a,b$, let 
\[
\Par(a \times b) \coloneqq \Big\{ \text{partitions $\lambda$} \; \Big| \; \text{$\lambda$ has length $\leq a$ and width $\leq b$}\Big\}.
\]
It is convenient to regard $\Par(a \times b)$ as the set of Young diagrams fitting inside a rectangle with $a$ rows and $b$ columns.
The number of such partitions is well known~\cite{Stanley}*{Prop.~1.2.1} to be given by the binomial coefficient
\begin{equation}
    \label{size Par ab}
    |\Par(a \times b)| = \binom{a+b}{a}.
\end{equation}
Clearly $\lambda \mapsto \lambda'$ is a bijection between $\Par(a \times b)$ and $\Par(b \times a)$.
Note that $\Par(0 \times b) = \Par(a \times 0) = \{\varnothing\}$, where $\varnothing$ is the empty Young diagram.

Given two Young diagrams $\lambda, \mu$, their \emph{symmetric difference} is
\[
    \lambda \smtriangle \mu \coloneqq (\lambda \cup \mu) \setminus (\lambda \cap \mu),
\]
where as usual $\lambda \cup \mu$ denotes the set of boxes occurring in either diagram, and $\lambda \cap \mu$ denotes the set of boxes occurring in both diagrams.
Hence $\lambda \smtriangle \mu$ is the set of boxes which occur in exactly one of the two diagrams.
For example,
\[
\text{if } \lambda = \ydiagram[*(black!15)]{6,5,2,2,1} \text{ and } \mu = \ydiagram[*(black!40)]{4,4,4,3,0}, \text{ then } \lambda \smtriangle \mu = \ydiagram[*(black!15)]{4+2,4+1,0,0,1}*[*(black!40)]{0,0,2+2,2+1}*
{6,5,4,3,1},
\]
and so $|\lambda \smtriangle \mu| = 7$, visualized as the seven shaded cells on the right-hand side.
Crucial to our result will be the following sum of cardinalities of symmetric differences, which we denote by
\begin{equation}
\label{S triangle}
    \Stri(a,b \mid c,d) \coloneqq \sum_{\mathclap{\substack{\lambda \in \Par(a \times b), \\ \mu \in \Par(c \times d)\phantom{,}}}} |\lambda \smtriangle \mu|.
\end{equation}

\begin{lemma}
    \label{lemma:conjugate}
    We have $\Stri(a, b \mid c,d) = \Stri(b,a \mid d,c)$.
\end{lemma}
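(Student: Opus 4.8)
The plan is to exhibit an explicit bijection between the index sets of the two sums and to check that the summand is invariant under this bijection. Conjugation $\lambda \mapsto \lambda'$ is, as noted in the excerpt, a bijection from $\Par(a \times b)$ onto $\Par(b \times a)$, and likewise from $\Par(c \times d)$ onto $\Par(d \times c)$. Hence the map $(\lambda, \mu) \mapsto (\lambda', \mu')$ is a bijection from $\Par(a \times b) \times \Par(c \times d)$ onto $\Par(b \times a) \times \Par(d \times c)$, which precisely matches the index set of $\Stri(a,b \mid c,d)$ with that of $\Stri(b,a \mid d,c)$. It remains to show that the corresponding terms agree, i.e.\ that $|\lambda \smtriangle \mu| = |\lambda' \smtriangle \mu'|$.

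The key observation is that conjugation is the restriction of a single global involution on the set of all lattice cells, namely the reflection $(i,j) \mapsto (j,i)$ across the main diagonal. Each Young diagram is a finite subset of these cells, and $\lambda'$ is simply the image of $\lambda$ under this reflection. Because any bijection of an ambient set commutes with the Boolean operations of union, intersection, and set difference, I expect to obtain $(\lambda \cup \mu)' = \lambda' \cup \mu'$ and $(\lambda \cap \mu)' = \lambda' \cap \mu'$, and therefore $(\lambda \smtriangle \mu)' = \lambda' \smtriangle \mu'$. Since reflection is a bijection, it preserves cardinality, so $|\lambda \smtriangle \mu| = |\lambda' \smtriangle \mu'|$.

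Combining these two facts, I would reindex the sum defining $\Stri(a,b \mid c,d)$ along the bijection $(\lambda,\mu) \mapsto (\lambda',\mu')$ and replace each summand $|\lambda \smtriangle \mu|$ by the equal value $|\lambda' \smtriangle \mu'|$, arriving at $\sum_{\lambda' \in \Par(b \times a),\, \mu' \in \Par(d \times c)} |\lambda' \smtriangle \mu'| = \Stri(b,a \mid d,c)$, as desired. There is no real obstacle here beyond the bookkeeping: the only point requiring a moment's care is the observation that conjugation is a genuine reflection of lattice cells (rather than merely a formal operation on partitions), since this is what guarantees compatibility with the symmetric difference. Once that is granted, the result is immediate.
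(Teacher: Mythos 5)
Your proof is correct and follows essentially the same route as the paper's: reindex the sum via the conjugation bijection $(\lambda,\mu) \mapsto (\lambda',\mu')$ and observe that conjugation, being a reflection of lattice cells across the main diagonal, satisfies $(\lambda \smtriangle \mu)' = \lambda' \smtriangle \mu'$ and hence preserves the cardinality of the symmetric difference. You merely spell out in more detail the point the paper states in one line, namely that the reflection commutes with the Boolean operations.
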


\begin{proof}
    Recalling the bijection $\lambda \mapsto \lambda'$, we see that $\Stri(b,a \mid d,c)$ is obtained from~\eqref{S triangle} by replacing $\lambda$ with $\lambda'$ and $\mu$ with $\mu'$.
    Since $\lambda' \smtriangle \mu'$ is just the set of boxes obtained by reflecting the set $\lambda \smtriangle \mu$ across the main diagonal, we have $|\lambda \smtriangle \mu| = |\lambda' \smtriangle \mu'|$.
    \end{proof}

\begin{lemma}
    \label{lemma:construction}
    For positive integers $k, \ell, m$, we have
    \begin{align}
    \label{lemma equation}
    \begin{split}
    \Stri(k, \ell \mid k, m) &= \Stri(k, \ell-1 \mid k, m) + \Stri(k, \ell \mid k, m-1) - \Stri(k, \ell-1 \mid k, m-1) \\
    & \hspace{10pt} + \Stri(k-1, \ell \mid k-1, m) \\
    & \hspace{+20pt} + |\ell - m| \cdot |\Par((k-1) \times \ell)|\cdot |\Par((k-1) \times m)|.
    \end{split}
    \end{align}
\end{lemma}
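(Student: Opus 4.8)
The plan is to partition the index set of the sum defining $\Stri(k,\ell\mid k,m)$ according to whether each diagram attains its maximal width. Write $\Par(k\times\ell)$ as the disjoint union of those $\lambda$ of width at most $\ell-1$ (namely $\Par(k\times(\ell-1))$) and those of width exactly $\ell$, and do the same for $\mu$ with $m$ in place of $\ell$. This splits $\Stri(k,\ell\mid k,m)$ into four partial sums, one for each of the four combinations. A routine inclusion--exclusion then shows that the first three terms on the right-hand side of~\eqref{lemma equation}, namely $\Stri(k,\ell-1\mid k,m)+\Stri(k,\ell\mid k,m-1)-\Stri(k,\ell-1\mid k,m-1)$, account for exactly the three partial sums in which at least one of $\lambda,\mu$ fails to attain maximal width. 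Hence it remains to identify the remaining partial sum --- over pairs with $\lambda$ of width exactly $\ell$ and $\mu$ of width exactly $m$ --- with the last two terms of~\eqref{lemma equation}.

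For this I would set up the bijection that deletes the (necessarily full) top row. If $\lambda$ has length at most $k$ and width exactly $\ell$, then $\lambda_1=\ell$, and removing the first row yields $\tilde\lambda\in\Par((k-1)\times\ell)$; this map is a bijection, with inverse prepending a row of length $\ell$ (weak decrease is preserved because $\lambda_1=\ell\geq\tilde\lambda_1$). The analogous bijection sends such a $\mu$ to $\tilde\mu\in\Par((k-1)\times m)$.

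The key step is the observation that under this bijection the symmetric difference splits cleanly. Since both top rows are full, row $1$ of $\lambda\smtriangle\mu$ consists of exactly the columns from $\min\{\ell,m\}+1$ to $\max\{\ell,m\}$, contributing $|\ell-m|$ boxes independently of $\tilde\lambda,\tilde\mu$, while the remaining rows reproduce $\tilde\lambda\smtriangle\tilde\mu$ exactly. Thus $|\lambda\smtriangle\mu|=|\ell-m|+|\tilde\lambda\smtriangle\tilde\mu|$. Summing over all pairs $(\tilde\lambda,\tilde\mu)\in\Par((k-1)\times\ell)\times\Par((k-1)\times m)$, the constant term contributes $|\ell-m|$ times the number of such pairs, i.e. $|\ell-m|\cdot|\Par((k-1)\times\ell)|\cdot|\Par((k-1)\times m)|$, while the second term contributes $\Stri(k-1,\ell\mid k-1,m)$ by definition. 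This matches the final two terms of~\eqref{lemma equation} and closes the identity.

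I expect the only real subtlety to be the decomposition of the symmetric difference in the key step: one must verify that fullness of the top rows makes the row-$1$ contribution constant (equal to $|\ell-m|$) and decouples it from the contribution of the lower rows, so that the two factors in the product term genuinely separate. Everything else --- the fourfold split, the inclusion--exclusion accounting, and the top-row bijection --- is straightforward bookkeeping.
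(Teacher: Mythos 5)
Your proposal is correct and follows essentially the same route as the paper's proof: both split the sum according to whether $\lambda$ and $\mu$ attain full width (the paper's set $Q$ being your fourth partial sum), handle the remaining pairs by the same inclusion--exclusion, and identify the full-width pairs via deletion of the (full) top rows, which contributes $|\ell-m|$ plus the symmetric difference of the truncated diagrams. Your explicit verification that the top-row contribution decouples from the lower rows is exactly the paper's observation, just spelled out in slightly more detail.
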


\begin{proof}
    This can be seen by the inclusion--exclusion principle, as follows.
    We claim that
    \begin{equation}
    \label{first line}
        \sum_{\mathclap{\substack{(\lambda,\mu) \\ \in \: \Par(k \times \ell) \times \Par(k \times m) \setminus Q}}}|\lambda \smtriangle \mu| = \text{first line of the right-hand side of~\eqref{lemma equation},}    \end{equation}
    where $Q$ is the subset containing those pairs $(\lambda,\mu)$ such that $\lambda$ has $\ell$ nonempty columns and $\mu$ has $m$ nonempty columns.
    It is clear that any pair $(\lambda,\mu) \in Q$ is not counted in the first line of~\eqref{lemma equation}; to prove the claim~\eqref{first line}, we observe that each pair $(\lambda,\mu) \in \Par(k \times (\ell-1)) \times \Par(k \times (m-1))$ is counted in \emph{both} of the first two terms, which we correct by subtracting the third term.

    Now we claim that the last two lines on the right-hand side of ~\eqref{lemma equation} equal
    \begin{equation}
        \label{last two}
        \sum_{\mathclap{(\lambda,\mu) \in Q}}|\lambda \smtriangle \mu|.
    \end{equation}
    We start with the third line.
    Note that $(\lambda,\mu) \in Q$ if and only if $\lambda$ has width $\ell$ and $\mu$ has width $m$.
    Hence each pair $(\lambda, \mu) \in Q$, if we restrict our attention to the top row of each diagram, contributes $|\ell - m|$ to~\eqref{last two}.
    Moreover, we have
    \[
    |Q| = |\Par((k-1) \times \ell)|\cdot |\Par((k-1) \times m)|,
    \]
    since each $(\lambda, \mu) \in Q$ is determined by the unique pair $(\hat{\lambda}, \hat{\mu})$ where 
    \begin{align*}
        \hat{\lambda} &\in \Par((k-1) \times \ell) \text{ is the complement of the top row in $\lambda$,}\\
        \hat{\mu} &\in \Par((k-1) \times m) \text{ is the complement of the top row in $\mu$.}
    \end{align*}
    Hence we have
    \begin{equation}
        \label{last claim}
        \sum_{\mathclap{(\lambda,\mu) \in Q}}|\lambda \smtriangle \mu| = \underbrace{|\ell - m| \cdot |Q|}_{\text{third line of~\eqref{lemma equation}}} + \underbrace{\sum_{(\lambda, \mu) \in Q} \hspace{-1ex}|\hat{\lambda} \smtriangle \hat{\mu}|.}_{\text{second line of~\eqref{lemma equation}}}
    \end{equation}
    The result~\eqref{lemma equation} follows, upon comparing~\eqref{S triangle} with~\eqref{first line} and~\eqref{last claim}.
\end{proof}

\begin{rem}
    The proof of Lemma~\ref{lemma:construction} can be conveniently visualized as follows.
    Define the symbols
    \[
     \Stri\!\left(
\;\begin{tikzpicture}[scale=.6,baseline={([yshift=-\the\dimexpr\fontdimen22\textfont2\relax]
                    current bounding box.center)},thick]
\draw[fill=gray!50] (0,0) rectangle (1.5,1);
\end{tikzpicture} \hspace{1ex}
\begin{tikzpicture}[scale=.6,baseline={([yshift=-\the\dimexpr\fontdimen22\textfont2\relax]
                    current bounding box.center)},thick]
\draw[fill=gray!50] (0,0) rectangle (2,1);
\end{tikzpicture} 
\;
\right) \coloneqq \Stri(k, \ell \mid k, m), \qquad \begin{tikzpicture}[scale=.6,baseline=2mm,thick]
\draw[gray,dashed] (0,0) rectangle (1.5,1);
\draw[gray,dashed] (1.5,1) -- (2.2,1) -- (2.2,0) -- (1.5,0);
\draw[{Latex[scale=.7]}-{Latex[scale=.7]}] (1.5,.5) -- (2.2,.5);
\node[label=right:{$\coloneqq |\ell - m|,$}] at (2.2,.5) {};
\end{tikzpicture}
    \]
    where we have arbitrarily depicted $\ell \leq m$.
    Then the recursive identity~\eqref{lemma equation} can be expressed in terms of these symbols, where a missing strip denotes exactly one row or column:
    \begin{align*}
        \Stri\!\left(
\;\begin{tikzpicture}[scale=.6,baseline={([yshift=-\the\dimexpr\fontdimen22\textfont2\relax]
                    current bounding box.center)},thick]
\draw[fill=gray!50] (0,0) rectangle (1.5,1);
\end{tikzpicture} \hspace{1ex}
\begin{tikzpicture}[scale=.6,baseline={([yshift=-\the\dimexpr\fontdimen22\textfont2\relax]
                    current bounding box.center)},thick]
\draw[fill=gray!50] (0,0) rectangle (2,1);
\end{tikzpicture} 
\;
\right) =& && \Stri\!\left(
\;\begin{tikzpicture}[scale=.6,baseline={([yshift=-\the\dimexpr\fontdimen22\textfont2\relax]
                    current bounding box.center)},thick]
\draw[fill=gray!50] (0,0) rectangle (1.3,1);
\draw (0,0) rectangle (1.5,1);
\end{tikzpicture} \hspace{1ex}
\begin{tikzpicture}[scale=.6,baseline={([yshift=-\the\dimexpr\fontdimen22\textfont2\relax]
                    current bounding box.center)},thick]
\draw[fill=gray!50] (0,0) rectangle (2,1);
\end{tikzpicture} 
\;
\right) + \Stri\!\left(
\;\begin{tikzpicture}[scale=.6,baseline={([yshift=-\the\dimexpr\fontdimen22\textfont2\relax]
                    current bounding box.center)},thick]
\draw[fill=gray!50] (0,0) rectangle (1.5,1);
\end{tikzpicture} \hspace{1ex}
\begin{tikzpicture}[scale=.6,baseline={([yshift=-\the\dimexpr\fontdimen22\textfont2\relax]
                    current bounding box.center)},thick]
\draw[fill=gray!50] (0,0) rectangle (1.8,1);
\draw (0,0) rectangle (2,1);
\end{tikzpicture} 
\;
\right) - \Stri\!\left(
\;\begin{tikzpicture}[scale=.6,baseline={([yshift=-\the\dimexpr\fontdimen22\textfont2\relax]
                    current bounding box.center)},thick]
\draw[fill=gray!50] (0,0) rectangle (1.3,1);
\draw (0,0) rectangle (1.5,1);
\end{tikzpicture} \hspace{1ex}
\begin{tikzpicture}[scale=.6,baseline={([yshift=-\the\dimexpr\fontdimen22\textfont2\relax]
                    current bounding box.center)},thick]
\draw[fill=gray!50] (0,0) rectangle (1.8,1);
\draw (0,0) rectangle (2,1);
\end{tikzpicture} 
\;
\right) \\
        & + &&\Stri\!\left(
\;\begin{tikzpicture}[scale=.6,baseline={([yshift=-\the\dimexpr\fontdimen22\textfont2\relax]
                    current bounding box.center)},thick]
\draw[fill=gray!50] (0,0) rectangle (1.5,0.8);
\draw (0,0) rectangle (1.5,1);
\end{tikzpicture} \hspace{1ex}
\begin{tikzpicture}[scale=.6,baseline={([yshift=-\the\dimexpr\fontdimen22\textfont2\relax]
                    current bounding box.center)},thick]
\draw[fill=gray!50] (0,0) rectangle (2,0.8);
\draw (0,0) rectangle (2,1);
\end{tikzpicture} 
\;
\right) \\
        & + && \begin{tikzpicture}[scale=.6,baseline={([yshift=-\the\dimexpr\fontdimen22\textfont2\relax]
                    current bounding box.center)},thick]
\draw[gray,dashed] (0,0) rectangle (1.5,1);
\draw[gray,dashed] (1.5,1) -- (2.2,1) -- (2.2,0) -- (1.5,0);
\draw[{Latex[scale=.7]}-{Latex[scale=.7]}] (1.5,.5) -- (2.2,.5);
\end{tikzpicture} \; \cdot \#\Par \! \left(
\;\begin{tikzpicture}[scale=.6,baseline={([yshift=-\the\dimexpr\fontdimen22\textfont2\relax]
                    current bounding box.center)},thick]
\draw[fill=gray!50] (0,0) rectangle (1.5,.8);
\draw (0,0) rectangle (1.5,1);
\end{tikzpicture}
\;
\right)
 \cdot \#\Par \! \left(
\;\begin{tikzpicture}[scale=.6,baseline={([yshift=-\the\dimexpr\fontdimen22\textfont2\relax]
                    current bounding box.center)},thick]
\draw[fill=gray!50] (0,0) rectangle (2,.8);
\draw (0,0) rectangle (2,1);
\end{tikzpicture}
\;
\right)
\!.
    \end{align*}
\end{rem}

\section{Main result}
\label{sec:main result}

\begin{theorem}
    \label{thm:main result}
    Let $N_{pq}(t)$ be the polynomial defined recursively in~\eqref{Npq recursion}.
    Then
    \begin{equation}
    \label{Npq in theorem}
        N_{pq}(t) = \sum_{k=1}^{\min\{p,q\}} \Stri(k, p-k \mid k, q-k) \cdot t^k,
    \end{equation}
    where the coefficients $\Stri( - )$ are defined in~\eqref{S triangle}.
\end{theorem}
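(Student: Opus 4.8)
The plan is to prove \eqref{Npq in theorem} by strong induction on $p+q$, showing that the proposed right-hand side satisfies the same recursion \eqref{Npq recursion} and the same initial conditions as the recursively defined $N_{pq}(t)$. Write $M_{pq}(t) \coloneqq \sum_{k=1}^{\min\{p,q\}} \Stri(k, p-k \mid k, q-k)\, t^{k}$ for the candidate formula. The initial conditions are immediate: when $p=0$ or $q=0$ the defining sum is empty, so $M_{0,q} = M_{p,0} = 0$; and $M_{1,1} = \Stri(1,0 \mid 1,0)\, t = 0$, since $\Par(1 \times 0) = \{\varnothing\}$ forces $\lambda \smtriangle \mu = \varnothing$. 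These match the initial values $N_{0,q} = N_{p,0} = N_{1,1} = 0$. I also record that both families are symmetric under $p \leftrightarrow q$: for $N_{pq}$ this follows from the symmetry of \eqref{Npq recursion} and $W_{pq} = W_{qp}$, and for $M_{pq}$ from the size-preserving involution $(\lambda,\mu) \mapsto (\mu,\lambda)$ on symmetric differences. This symmetry will let me assume $p \leq q$ when handling the boundary.

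The core of the argument is to verify that $M_{pq}$ obeys \eqref{Npq recursion} as an identity among the $M$'s, namely
\[
M_{pq} = M_{p-1,q} + M_{p,q-1} - (1-t)M_{p-1,q-1} + |p-q|\, t\, W_{pq};
\]
once this holds, the induction closes, since \eqref{Npq recursion} determines the value at $(p,q)$ from strictly smaller index-sums. I would prove this by comparing the coefficient of $t^{k}$ on both sides. By \eqref{Wpq} the coefficient of $t^{k}$ in $|p-q|\, t\, W_{pq}$ is $|p-q|\binom{p-1}{k-1}\binom{q-1}{k-1}$, while expanding $-(1-t)M_{p-1,q-1}$ produces the coefficients $-[t^{k}]M_{p-1,q-1}$ and $+[t^{k-1}]M_{p-1,q-1}$. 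Setting $\ell \coloneqq p-k$ and $m \coloneqq q-k$, so that $|\ell - m| = |p-q|$ and, by \eqref{size Par ab}, $|\Par((k-1)\times\ell)| = \binom{p-1}{k-1}$ and $|\Par((k-1)\times m)| = \binom{q-1}{k-1}$, the resulting coefficient identity is precisely the recursion of Lemma \ref{lemma:construction}. Hence for every interior index $1 \leq k \leq \min\{p,q\}-1$, where $\ell, m \geq 1$ and all the $\Stri$-terms coincide with the corresponding coefficients of the $M$'s, the two sides agree term by term; and for $k > \min\{p,q\}$ both sides vanish, by the vanishing of the relevant binomial coefficients and the out-of-range $\Stri$-terms.

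The main obstacle is the top-degree coefficient $k = \min\{p,q\}$, where Lemma \ref{lemma:construction} does not apply directly because one of $\ell, m$ equals $0$. Using the symmetry above, assume $p \leq q$ and $k = p$. If $p = q$, both sides vanish: the left side is $\Stri(p,0 \mid p,0) = 0$, on the right every $M$-term is out of range except $[t^{p-1}]M_{p-1,p-1} = \Stri(p-1,0 \mid p-1,0) = 0$, and the binomial term carries the factor $|p-q| = 0$; this already covers the diagonal case $p=q$ of ultimate interest. If $p < q$, then $\ell = 0$ and $m = q-p \geq 1$, and the required identity collapses to
\[
\Stri(p,0 \mid p,q-p) = \Stri(p,0 \mid p,q-p-1) + \Stri(p-1,0 \mid p-1,q-p) + (q-p)\binom{q-1}{p-1}.
\]
I expect to settle this in one of two ways: either by rerunning the inclusion--exclusion argument of Lemma \ref{lemma:construction} in the degenerate case $\ell = 0$, adopting the convention that $\Stri$ vanishes whenever an argument is negative (the relevant set of partitions then being empty); or, more explicitly, by using the complementation involution $\mu \mapsto \mu^{c}$ inside an $a \times b$ rectangle, which pairs $|\mu|$ with $ab - |\mu|$ and yields the closed form $\Stri(a,0 \mid a,b) = \sum_{\mu \in \Par(a \times b)} |\mu| = \tfrac{1}{2}\, ab \binom{a+b}{a}$, after which the displayed identity is a routine binomial computation. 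With the boundary case verified, the coefficientwise comparison is complete, the recursion for $M_{pq}$ is established, and the induction delivers $N_{pq}(t) = M_{pq}(t)$.
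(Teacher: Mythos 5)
Your proposal is correct and follows essentially the same route as the paper: strong induction via the recursion \eqref{Npq recursion}, with the coefficient-of-$t^k$ comparison reducing exactly to the identity of Lemma~\ref{lemma:construction}. The only difference is that you explicitly isolate and verify the top coefficient $k=\min\{p,q\}$, where one of $\ell,m$ is $0$ and the lemma's hypotheses fail --- a boundary case the paper passes over implicitly (in effect using the convention that $\Stri$ with a negative argument vanishes) --- and your proposed fix is sound: the closed form $\Stri(a,0\mid a,b)=\tfrac{1}{2}ab\binom{a+b}{a}$ from the complementation involution does hold, and the displayed boundary identity then checks out as a routine binomial computation.
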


\begin{proof}
    Recall from~\eqref{Npq recursion} the recursion defining $N_{pq}(t)$, which we expand below (labeling each term for clarity later in the proof):
    \begin{equation}
        \label{Npq expanded}
        N_{pq} = \underbrace{N_{p-1,q}}_{\mathbf A} + \underbrace{N_{p,q-1}}_{\mathbf B} - \underbrace{N_{p-1, q-1}}_{\mathbf C} + \; t \cdot \underbrace{N_{p-1,q-1}}_{\mathbf D} + \; t \cdot \underbrace{|p-q| \cdot W_{pq}}_{\mathbf E},
    \end{equation}
    with initial values $N_{0,q} = N_{p,0} = N_{1,1} = 0$.
    
    We begin by verifying the initial values.
    Clearly when $p$ or $q$ is zero, the sum in~\eqref{Npq in theorem} is empty.
    If $p=q=1$, then there is a single summand where $k=1$, namely
    \[
        \Stri(1,0 \mid 1,0) \cdot t.
    \]
    Since $\Par(1 \times 0)$ contains only the empty partition, the expression above is zero, as required.

    It remains to verify the recursion itself.
    As our induction hypothesis, assume that~\eqref{Npq in theorem} is valid for all $N_{p'q'}$ with $p' < p$ or $q' < q$.
    It suffices to show, for all $k$, that the coefficient of $t^k$ in the right-hand side of~\eqref{Npq in theorem}, namely $\Stri(k, p-k \mid k, q-k)$, equals the coefficient of $t^k$ in the right-hand side of~\eqref{Npq expanded}.
    That is, writing $[t^k] f$ to denote the coefficient of $t^k$ in a polyomial $f(t)$,
    we need to show that
    \begin{align}
        \Stri(k, p-k \mid k, q-k) &= [t^k](\mathbf{A} + \mathbf{B} - \mathbf{C} + t\mathbf{D} + t\mathbf{E}) \nonumber \\
        &= [t^k]\mathbf{A} + [t^k]\mathbf{B} - [t^k]\mathbf{C} + [t^{k-1}]\mathbf{D} + [t^{k-1}]\mathbf{E}. \label{WTS} 
    \end{align}
    For ease of notation (and to align with Lemma~\ref{lemma:construction}), set $\ell \coloneqq p-k$ and $m \coloneqq q-k$.
    The required coefficients in $\mathbf{A}, \ldots, \mathbf{D}$ follow directly from~\eqref{Npq in theorem} via the induction hypothesis:
    \begin{alignat*}{3}
        [t^k]\mathbf{A} &= [t^k]N_{p-1,q} &&=\Stri(k, \ell-1 \mid k, m),\\
        [t^k]\mathbf{B} &= [t^k]N_{p,q-1} &&= \Stri(k, \ell \mid k, m-1),\\
        [t^k]\mathbf{C} &= [t^k]N_{p-1,q-1} &&= \Stri(k, \ell-1 \mid k, m-1),\\
        [t^{k-1}]\mathbf{D} &= [t^{k-1}]N_{p-1,q-1} &&= \Stri(k-1, \ell \mid k-1, m).
    \end{alignat*}
    The coefficient $[t^{k-1}]\mathbf{E}$ can be expressed by means of previous identities:
    \begin{align*}
        [t^{k-1}]\mathbf{E} &= |p-q| \cdot [t^{k-1}]W_{pq} \\
        &= |p-q| \cdot \textstyle\binom{p-1}{k-1} \binom{q-1}{k-1} && \text{by~\eqref{Wpq}}\\
        &= |(\ell+k) - (m+k)| \cdot \textstyle\binom{\ell + k-1}{k-1} \binom{m + k-1}{k-1}\\
        &= |\ell - m| \cdot |\Par((k-1) \times \ell)| \cdot |\Par((k-1) \times m)| && \text{by~\eqref{size Par ab}}.
    \end{align*}
  Upon substituting these five coefficients back into~\eqref{WTS}, we obtain the equation in Lemma~\ref{lemma:construction}, which verifies~\eqref{WTS} as desired. 
  Finally, to determine the range of the sum in~\eqref{Npq in theorem}, we observe that $\Stri(k, p-k \mid k, q-k)$ is nonzero only when $1 \leq k \leq \min\{p,q\}$.
\end{proof}

The following corollary establishes the palindromic part of Theorem~\ref{thm:main conjecture} (thus settling half of the original conjecture from~\cite{BW}).

\begin{corollary}
\label{cor:main}
    For all positive integers $n$, the polynomial $N_{n}(t)$ is palindromic. 
\end{corollary}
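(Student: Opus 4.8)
The plan is to specialize the explicit formula from Theorem~\ref{thm:main result} to $p=q=n$, read off the coefficients of $N_n(t)$, and then match symmetric pairs of coefficients using the conjugation symmetry already established in Lemma~\ref{lemma:conjugate}. Concretely, setting $p=q=n$ in~\eqref{Npq in theorem} gives
\[
N_n(t) = \sum_{k=1}^{n} \Stri(k, n-k \mid k, n-k) \cdot t^k,
\]
so that the coefficient $f_k$ of $t^k$ is exactly $\Stri(k, n-k \mid k, n-k)$.

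Next I would pin down the true degree range so as to identify the total degree $d$ correctly. The nominal top term, at $k=n$, has coefficient $\Stri(n, 0 \mid n, 0)$; since $\Par(n \times 0) = \{\varnothing\}$, the only pair is $(\varnothing,\varnothing)$ and this coefficient vanishes. For $n \geq 2$ the lowest and highest \emph{nonzero} coefficients therefore occur at $k=1$ and $k=n-1$ respectively (both are easily seen to be positive, as the relevant symmetric differences of single rows, resp.\ single columns, do not all vanish). In the notation of the palindromicity definition this means $a=1$ and $b=n-1$, so the total degree is $d = a+b = n$. (The remaining case $n=1$ gives $N_1(t)=0$, which is palindromic trivially.)

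It then remains to verify that $f_k = f_{d-k} = f_{n-k}$ for every $k$. Substituting $k \mapsto n-k$ into the summand shows that the coefficient of $t^{n-k}$ is $\Stri(n-k, k \mid n-k, k)$, so palindromicity is equivalent to the identity
\[
\Stri(k, n-k \mid k, n-k) = \Stri(n-k, k \mid n-k, k).
\]
But this is precisely Lemma~\ref{lemma:conjugate}, applied with its four arguments taken to be $k,\, n-k,\, k,\, n-k$. Hence every coefficient equals its mirror image, and $N_n(t)$ is palindromic.

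Because the coefficient matching reduces immediately to the reflection-across-the-diagonal bijection $\lambda \mapsto \lambda'$ of Lemma~\ref{lemma:conjugate}, I do not expect a genuine obstacle here. The only point requiring care is the degree bookkeeping: one must notice that the $k=n$ term of the sum is identically zero before declaring the total degree to be $n$, so that the symmetry $f_k = f_{n-k}$ is stated about the correct center of symmetry.
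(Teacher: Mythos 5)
Your proposal is correct and follows essentially the same route as the paper: specialize Theorem~\ref{thm:main result} to $p=q=n$, observe that the $k=n$ coefficient $\Stri(n,0\mid n,0)$ vanishes so the total degree is $n$, and then deduce the coefficient symmetry $\Stri(k,n-k\mid k,n-k)=\Stri(n-k,k\mid n-k,k)$ from the conjugation bijection of Lemma~\ref{lemma:conjugate}. Your extra care in checking that the extreme coefficients at $k=1$ and $k=n-1$ are nonzero (and treating $n=1$ separately) is a slight refinement of the paper's bookkeeping, not a different argument.
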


\begin{proof}
    Since $p=q=n$, the coefficient of $t^k$ in~\eqref{Npq in theorem} equals $\Stri(k,0 \mid k,0) = 0$; therefore $N_n(t)$ has degree $n-1$, and total degree $1 + (n-1) = n$.
    By the definition of palindromicity, we need to show that 
    \[
    \Stri(k, n-k \mid k, n-k) =  \Stri(n-k, k \mid n-k, k)
    \]
    for all $1 \leq k \leq n-1$, and this follows directly from Lemma~\ref{lemma:conjugate}.
\end{proof}

\begin{ex}

Recall from Section~\ref{sub:Npq recursion} the case $n=4$:
\[
N_4(t) = 20t + 56t^2 + 20t^3.
\]
By Theorem~\ref{thm:main result}, the coefficients are sums of sizes of symmetric differences of ordered pairs of partitions, where the partitions range over $\Par(k \times (4-k))$, for $k=1,2,3$.
The three tables below give the values $|\lambda \smtriangle \mu|$, taken over all ordered pairs $(\lambda,\mu)$ of partitions with the requisite shapes:
\[
\ytableausetup{boxsize=4pt}
\overset{\substack{\textstyle \Par(1 \times 3) \\ \phantom{.}}}{\underbrace{\begin{array}[t]{l|cccc}
     & \varnothing & \ydiagram{1} & \ydiagram{2} & \ydiagram{3} \\
     \hline
    \varnothing & 0 & 1 & 2 & 3 \\
    \ydiagram{1}
& 1 & 0 & 1 & 2\\
\ydiagram{2} & 2 & 1 & 0 & 1\\
\ydiagram{3} & 3 & 2 & 1 & 0
\end{array}}_{\text{sum = $20$}}}
\qquad\qquad
\overset{\substack{\textstyle \Par(2 \times 2) \\ \phantom{.}}}{\underbrace{\begin{array}[t]{l|cccccc}
     & \varnothing & \ydiagram{1} & \ydiagram{2} & \ydiagram{1,1} & \ydiagram{2,1} & \ydiagram{2,2}\\
     \hline
    \varnothing & 0 & 1 & 2 & 2 & 3 & 4 \\
    \ydiagram{1}
& 1 & 0 & 1 & 1 & 2 & 3\\
\ydiagram{2} & 2 & 1 & 0 & 2 & 1 & 2\\
\ydiagram{1,1} & 2 & 1 & 2 & 0 & 1 & 2 \\
\ydiagram{2,1} & 3 & 2 & 1 & 1 & 0 & 1\\
\ydiagram{2,2} & 4 & 3 & 2 & 2 & 1 & 0
\end{array}}_{\text{sum = $56$}}}
\qquad\qquad
\overset{\substack{\textstyle \Par(3 \times 1) \\ \phantom{.}}}{\underbrace{\begin{array}[t]{l|cccc}
     & \varnothing & \ydiagram{1} & \ydiagram{1,1} & \raisebox{1pt}{\ydiagram{1,1,1}} \\
     \hline
    \varnothing & 0 & 1 & 2 & 3 \\
    \ydiagram{1}
& 1 & 0 & 1 & 2\\
\ydiagram{1,1} & 2 & 1 & 0 & 1\\
\raisebox{-4pt}{\ydiagram{1,1,1}} & 3 & 2 & 1 & 0
\end{array}}_{\text{sum = $20$}}}
\]

\end{ex}

\section{Explicit formulas via minuscule lattices of Type A}
\label{sec:Defant}

Armed with the combinatorial interpretation in Theorem~\ref{thm:main result}, we show in this section that a recent result by Defant et al.~\cite{Defant}, concerning the Wiener index of minuscule lattices, yields the explicit formula for the coefficients of $N_n(t)$.
We use this to prove the unimodality of $N_n(t)$.
The connection goes even further.
By reinterpreting the results of~\cite{Defant}, we are now able to close the recursion in~\cite{BW} for the expected value of the one-dimensional EMD.

\subsection{Wiener index of minuscule lattices of Type A}

The minuscule lattices form a special class of Hasse diagrams, in which the underlying poset is the set of weights of a minuscule representation of a complex simple Lie algebra $\mathfrak g$.
(In this note we appeal only to the combinatorial structure of certain of these lattices, and therefore the following paragraph may be omitted without loss of continuity.)

A \emph{minuscule weight} of a complex simple Lie algebra $\mathfrak{g}$ is a minimal element $\varpi$ in the poset of dominant integral weights of $\mathfrak{g}$ (in the partial order whereby $\varpi \leq \varpi'$ if $\varpi' - \varpi$ is a nonnegative sum of simple roots).
For $\varpi$ minuscule, and $V_\varpi$ the irreducible representation of $\mathfrak{g}$ with highest weight $\varpi$, the 
set of weights of $V_{\varpi}$ is precisely the Weyl group orbit of $\varpi$, and its Hasse diagram is said to be a \emph{minuscule lattice}; see~\cite{Proctor} for details.
Minuscule lattices have various incarnations in the theory of Hermitian symmetric pairs $(\mathfrak{g}, \mathfrak{k})$.
For example, minuscule lattices are the Hasse diagrams (with respect to the Bruhat order) of the minimal-length representatives of the right cosets of the Weyl group of $\mathfrak{k}$ inside the Weyl group of $\mathfrak{g}$.
Equivalently, minuscule lattices are Hasse diagrams of the poset of lower-order ideals of the positive noncompact roots of $\mathfrak g$.
See~\cite{EHP} for an expansive exposition on the subject.

Let $G = (V,E)$ be a finite connected graph, and let $d(x,y)$ denote the distance in $G$ between $x,y \in V$.
The \emph{Wiener index} of $G$ is defined to be the sum of the distances between all ordered pairs of vertices:
\[
d(G) \coloneqq \sum_{\mathclap{(x,y) \in V \times V}} d(x,y).
\]
By extension, if $P$ is a finite poset, then $d(P)$ is defined to be the Wiener index of the Hasse diagram of $P$.

When $\mathfrak{g}$ is of Type A (in the Killing--Cartan classification), a minuscule lattice is the Hasse diagram of the poset of order ideals in a rectangle, say of dimensions $a \times b$, ordered by inclusion.
In~\cite{Defant}, this poset is denoted by $P_{a,b}$.
The key fact, for our purposes, is that $P_{a,b} \cong \Par(a \times b)$ as posets, if we order Young diagrams by inclusion.
In other words, in $\Par(a \times b)$, we declare $\mu \leq \lambda$ if $\lambda$ is obtained by adding boxes to $\mu$.
Hence $\varnothing$ is the minimal element of $\Par(a \times b)$, while $(b^a)$ is the maximal element.\footnote{This fact can be restated in somewhat different, but quite standard, combinatorial language:
if $\mathcal{Y}$ denotes \emph{Young's lattice} (the Hasse diagram of all Young diagrams ordered by inclusion), then we can identify $P_{a,b} \cong \Par(a \times b)$ with the lower order ideal of $\mathcal{Y}$ generated by the rectangular Young diagram with $a$ rows and $b$ columns.}
It is easy to see (and is noted in~\cite{Defant}*{\S1.3}) that distance on the Hasse diagram of $P_{a,b}$ measures the symmetric difference of the corresponding Young diagrams;
after all, neighboring elements in the Hasse diagram differ by exactly one box.
Therefore, given $\lambda,\mu \in \Par(a \times b)$, we have $d(\lambda,\mu) = |\lambda \smtriangle \mu|$.

The upshot of this is that the Wiener index of $\Par(a \times b)$ equals the sum of the sizes of the symmetric differences of all ordered pairs.
Translating between~\cite{Defant} and our present note, this means that
\begin{equation}
    \label{Wiener equals S tri}
    d(P_{a,b}) = \Stri(a,b) \coloneqq \Stri(a,b \mid a,b).
\end{equation}
(From now on we will need only the specialized shorthand $\Stri(a, b)$, since we consider ordered pairs of elements from the common set $\Par(a \times b)$.)
By manipulating the generating functions of certain Motzkin paths, the authors of~\cite{Defant} derive the following formula (Thm.~1.2) for the Wiener index of $P_{a,b}$:
\begin{equation}
    \label{Wiener index of Pab}
    d(P_{a,b}) = \frac{ab}{4a + 4b + 2} \binom{2a+2b+2}{2a+1}.
\end{equation}
Combining this result with~\eqref{Wiener equals S tri} and our Theorem~\ref{thm:main result}, we obtain the following explicit formula for the coefficients of $N_n(t)$:

\begin{theorem}
\label{thm:coeffs of Nn}
    For all positive integers $n$, we have
    \[
    N_n(t) = \frac{1}{4n + 2} \cdot \sum_{k=1}^{n-1} k(n-k) \binom{2n+2}{2k+1} \: t^k.
    \]
\end{theorem}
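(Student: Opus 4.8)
The plan is to specialize Theorem~\ref{thm:main result} to the diagonal case $p=q=n$ and then translate each resulting coefficient into a Wiener index via the dictionary~\eqref{Wiener equals S tri}, at which point the explicit formula~\eqref{Wiener index of Pab} finishes the job. Setting $p=q=n$ in~\eqref{Npq in theorem} gives
\[
N_n(t) = \sum_{k=1}^{n} \Stri(k, n-k \mid k, n-k) \, t^k.
\]
The top summand $k=n$ contributes $\Stri(n,0 \mid n,0)$, which vanishes because $\Par(n \times 0) = \{\varnothing\}$ forces the only pair to be $(\varnothing,\varnothing)$; hence the sum effectively runs from $k=1$ to $k=n-1$, matching the range in the claimed formula.

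Next I would recognize each surviving coefficient as a Wiener index. For $1 \le k \le n-1$, the two rectangles appearing in $\Stri(k, n-k \mid k, n-k)$ coincide, so this is exactly the specialized quantity $\Stri(k, n-k)$ introduced in~\eqref{Wiener equals S tri}, which equals $d(P_{k, n-k})$. I would then invoke the closed form~\eqref{Wiener index of Pab} with $a = k$ and $b = n-k$. Since $a+b = n$, the denominator becomes $4a+4b+2 = 4n+2$, the top entry of the binomial becomes $2a+2b+2 = 2n+2$, and the bottom entry becomes $2a+1 = 2k+1$; therefore
\[
\Stri(k, n-k) = \frac{k(n-k)}{4n+2} \binom{2n+2}{2k+1}.
\]
Substituting this into the sum and factoring out the common denominator $4n+2$ produces precisely the stated expression.

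I do not expect any serious obstacle in this step: the theorem is a direct corollary of machinery already in place, and the only things requiring care are the substitution $a=k$, $b=n-k$ in~\eqref{Wiener index of Pab} (so that all three binomial parameters collapse correctly under $a+b=n$) and the vanishing of the degenerate $k=n$ term. The genuine content lives upstream---in the combinatorial interpretation of Theorem~\ref{thm:main result} and in the identification $d(P_{a,b}) = \Stri(a,b)$ borrowed from~\cite{Defant}---so the present argument amounts to assembling those two inputs and simplifying.
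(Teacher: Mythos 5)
Your proposal is correct and is exactly the paper's argument: the paper obtains Theorem~\ref{thm:coeffs of Nn} by combining Theorem~\ref{thm:main result} (specialized to $p=q=n$) with the identification $\Stri(a,b)=d(P_{a,b})$ from~\eqref{Wiener equals S tri} and the Wiener index formula~\eqref{Wiener index of Pab}, with the same substitution $a=k$, $b=n-k$. Your additional check that the $k=n$ term vanishes is also how the paper handles the degree (it appears in the proof of Corollary~\ref{cor:main}), so there is nothing to correct.
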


We now prove the unimodal part of Theorem~\ref{thm:main conjecture}, thus settling the original conjecture in~\cite{BW}:

\begin{corollary}
\label{cor:unimodal}
    For all positive integers $n$, the polynomial $N_n(t)$ is unimodal.
\end{corollary}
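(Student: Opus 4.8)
The plan is to work directly from the explicit coefficient formula in Theorem~\ref{thm:coeffs of Nn}. Write $c_k \coloneqq [t^k]N_n(t) = \tfrac{1}{4n+2}\,k(n-k)\binom{2n+2}{2k+1}$ for $1 \le k \le n-1$, and observe that every $c_k$ is strictly positive on this range. A sequence of positive terms is unimodal precisely when the ratios $c_{k+1}/c_k$ begin at least $1$ and, once they drop below $1$, never recover; so it suffices to compute this ratio and show that it is decreasing in $k$ and crosses the value $1$ at the center of the range. Alternatively, having already established palindromicity in Corollary~\ref{cor:main}, I could instead show merely that $c_{k+1} \ge c_k$ for all $k \le (n-1)/2$ and let the symmetry $c_k = c_{n-k}$ dispatch the descending half.

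The main step is to compute and simplify $c_{k+1}/c_k$. The polynomial factor contributes $\tfrac{(k+1)(n-k-1)}{k(n-k)}$, while the binomial factor, obtained by telescoping two single-step ratios $\binom{N}{j+1}/\binom{N}{j} = (N-j)/(j+1)$, contributes
\[
\frac{\binom{2n+2}{2k+3}}{\binom{2n+2}{2k+1}} = \frac{(2n+1-2k)(2n-2k)}{(2k+2)(2k+3)}.
\]
After cancelling the common factors $(k+1)$ and $(n-k)$ (using $2n-2k = 2(n-k)$ and $2k+2 = 2(k+1)$), the product collapses to the clean expression
\[
\frac{c_{k+1}}{c_k} = \frac{(n-k-1)(2n-2k+1)}{k(2k+3)}.
\]
The inequality $c_{k+1}/c_k \ge 1$ is then equivalent to $(n-k-1)(2n-2k+1) \ge k(2k+3)$; expanding both sides and cancelling the quadratic terms, this reduces to the linear condition $(2n+1)(n-1) \ge 2k(2n+1)$, that is, simply $2k \le n-1$.

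This is exactly what is needed: the ratio exceeds $1$ for $k < (n-1)/2$, equals $1$ when $n$ is odd and $k=(n-1)/2$, and falls below $1$ for $k > (n-1)/2$. Hence $c_1 \le c_2 \le \cdots \le c_{\lfloor n/2 \rfloor}$ and $c_{\lceil n/2 \rceil} \ge \cdots \ge c_{n-1}$, which is precisely the definition of unimodality (and is fully consistent with the palindromicity $c_k = c_{n-k}$ proved earlier). The only real obstacle is bookkeeping: one must correctly handle the two-step jump in the lower index of the binomial coefficient (from $2k+1$ to $2k+3$) and trust that the seemingly messy product of four factors telescopes to a single linear inequality. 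Once that simplification is carried out, no genuine difficulty remains.
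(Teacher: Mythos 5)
Your proof is correct, and it rests on the same foundation as the paper's: both arguments read unimodality directly off the explicit coefficient formula in Theorem~\ref{thm:coeffs of Nn}. The difference lies in the verification. The paper argues softly and factor-wise: it notes that $k(n-k)$ and $\binom{2n+2}{2k+1}$ are each increasing in $k$ for $k < n/2$ (the former by a derivative, the latter by the standard monotonicity of binomial coefficients below the middle), so the coefficients increase up to the center, and the descending half comes for free from the palindromicity already established in Corollary~\ref{cor:main}. You instead compute the consecutive ratio exactly, and your algebra checks out: the ratio collapses to $\frac{(n-k-1)(2n-2k+1)}{k(2k+3)}$, and the inequality $c_{k+1} \ge c_k$ does reduce, after the quadratic terms cancel, to $(2n+1)(n-1) \ge 2k(2n+1)$, i.e.\ $2k \le n-1$. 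This buys you slightly more than the paper's proof: your argument is self-contained (it does not invoke palindromicity for the descending half, though it is consistent with it), it shows the increase and decrease are strict away from the center, and it pinpoints the peak exactly, with a plateau $c_{(n-1)/2} = c_{(n+1)/2}$ precisely when $n$ is odd. The cost is the bookkeeping you flagged---the two-step jump $2k+1 \to 2k+3$ in the binomial's lower index---which you handled correctly by telescoping two single-step ratios. Either route settles the corollary; yours is the sharper computation, the paper's the quicker read.
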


\begin{proof}
    Using Theorem~\ref{thm:coeffs of Nn}, it suffices to show that both factors $k(n-k)$ and $\binom{2n+2}{2k+1}$ are nondecreasing as functions of $k$, for $1 \leq k < \lfloor n/2 \rfloor$.
    In fact, both factors are strictly increasing.
    In particular, it is well known that the binomial coefficients $\binom{a}{b}$ are increasing as functions of $b$, for $b < a/2$, and therefore $\binom{2(n+1)}{2k+1}$ is increasing for $k < n/2$.
    Moreover, treating $k(n-k)$ as a real function of $k$, we have $\frac{d}{dk} [k(n-k)] = n-2k$, which is positive for all $k < n/2$.
    This completes the proof.
\end{proof}

\subsection{Expected value of the EMD}

Not only does the formula~\eqref{Wiener index of Pab} lead to the complete description of $N_n(t)$ in Theorem~\ref{thm:coeffs of Nn}, but we now show that it also solves the recursion for the expected value of the EMD, which was the main result of~\cite{BW}.

The paper~\cite{Erickson23}*{eqn.~(10)} employs the bijection
\begin{align*}
    \C(s,n) &\longrightarrow \Par(s \times (n-1)),\\
    \alpha & \longmapsto \Big((n-1)^{\alpha_1}, (n-2)^{\alpha_2}, \ldots, 2^{\alpha_{n-2}}, 1^{\alpha_{n-1}}\Big),
\end{align*}
where the exponents denote repeated parts in a partition.
Now let $\alpha,\beta \in \C(s,n)$, and suppose that $\alpha \mapsto \lambda$ and $\beta \mapsto \mu$ under this bijection.
It is shown in~\cite{Erickson23}*{Prop.~3.1} that the EMD is precisely the size of the symmetric difference of the corresponding Young diagrams:
\[
    \EMD(\alpha,\beta) = |\lambda \smtriangle \mu|.
\]
Comparing this with~\eqref{H prime} and~\eqref{S triangle}, it follows that 
\begin{equation}
\label{sum EMD equals Stri}
    [t^s] H'_n(t) = \sum_{(\alpha,\beta)} \EMD(\alpha,\beta) = \sum_{(\lambda,\mu)}  |\lambda \smtriangle \mu| \eqqcolon \Stri(s, n-1),
\end{equation}
where $\alpha,\beta \in \C(s,n)$ and $\lambda,\mu \in \Par(s \times (n-1))$.
Therefore, the expected value of the EMD is simply $\Stri(s,n-1)$ divided by the square of $|\C(s,n)|$, or equivalently, divided by the square of $|\Par(s \times (n-1))|$.

\begin{theorem}
    \label{thm:EMD exp val}
    Let $(\alpha, \beta) \in \C(s,n) \times \C(s,n)$ be chosen uniformly at random.
    Then
    \[
    \mathbb{E}\Big[\EMD(\alpha,\beta)\Big] = \frac{s(n-1)}{4s+4n-2} \cdot \frac{\binom{2s+2n}{2s+1}}{\binom{s+n-1}{s}^2}.
    \]
\end{theorem}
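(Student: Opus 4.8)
The plan is to assemble the formula directly from the ingredients already in place, since the combinatorial heavy lifting has been done. By definition, the expected value is the total $\sum_{(\alpha,\beta)} \EMD(\alpha,\beta)$ divided by the number of ordered pairs $(\alpha,\beta) \in \C(s,n) \times \C(s,n)$. The numerator is exactly $\Stri(s, n-1)$ by~\eqref{sum EMD equals Stri}, while the denominator is $|\C(s,n)|^2$. Under the bijection $\C(s,n) \cong \Par(s \times (n-1))$ together with the cardinality formula~\eqref{size Par ab}, we have $|\C(s,n)| = \binom{s+n-1}{s}$, so the denominator equals $\binom{s+n-1}{s}^2$. Thus I would first record the identity
\[
\mathbb{E}\big[\EMD(\alpha,\beta)\big] = \frac{\Stri(s,n-1)}{\binom{s+n-1}{s}^2}.
\]

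Next I would evaluate the numerator using the Wiener index. By~\eqref{Wiener equals S tri} we have $\Stri(s, n-1) = d(P_{s, n-1})$, and the closed form~\eqref{Wiener index of Pab} with $a = s$ and $b = n-1$ gives
\[
\Stri(s, n-1) = \frac{s(n-1)}{4s + 4(n-1) + 2} \binom{2s + 2(n-1) + 2}{2s+1} = \frac{s(n-1)}{4s+4n-2}\binom{2s+2n}{2s+1},
\]
after simplifying $4(n-1)+2 = 4n-2$ and $2(n-1)+2 = 2n$. Substituting this into the quotient above yields exactly the claimed formula.

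The only point to verify with care is the alignment of parameters when specializing the Wiener index formula, namely that the roles of $a,b$ in $P_{a,b}$ match the rectangle $s \times (n-1)$ arising from the composition bijection of~\cite{Erickson23}; once this correspondence is pinned down, the result follows by direct substitution with no estimation or further combinatorial argument required. In this sense there is no genuine obstacle remaining at this stage: the substantive steps --- the identification of the EMD with the size of a symmetric difference, and the evaluation of the Wiener index of the minuscule lattice $P_{a,b}$ --- have already been established, so the theorem reduces to bookkeeping.
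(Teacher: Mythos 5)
Your proposal is correct and follows essentially the same route as the paper's own proof: identify the total $\sum_{(\alpha,\beta)} \EMD(\alpha,\beta)$ with $\Stri(s,n-1)$ via~\eqref{sum EMD equals Stri}, divide by $|\C(s,n)|^2 = \binom{s+n-1}{s}^2$ using~\eqref{size Par ab}, and then evaluate $\Stri(s,n-1)$ by specializing the Wiener index formula~\eqref{Wiener index of Pab} at $a=s$, $b=n-1$. The parameter alignment you flag is indeed the only point requiring care, and your simplifications $4(n-1)+2 = 4n-2$ and $2(n-1)+2 = 2n$ match the paper exactly.
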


\begin{proof}
    We have 
    \begin{align*}
    \mathbb{E}\Big[\EMD(\alpha,\beta)\Big] &\coloneqq \sum_{(\alpha,\beta)} \EMD(\alpha,\beta) \Big/ |\C(s,n)|^2 \\
    &= \Stri(s, n-1) \Big/ \textstyle \binom{s+n-1}{s}^2 && \text{by~\eqref{sum EMD equals Stri} and~\eqref{size Par ab}}\\
    &= \frac{s(n-1)}{4s+4(n-1) + 2} \cdot \textstyle \binom{2s + 2(n-1)+2}{2s+1}\Big/ \binom{s+n-1}{s}^2 && \text{by~\eqref{Wiener equals S tri} and \eqref{Wiener index of Pab},}
    \end{align*}
    which yields the expression in the theorem.
\end{proof}

\begin{rem}
\label{rem FV}
    The explicit formula in Theorem~\ref{thm:EMD exp val} gives us an easy way to compute the expected value of the EMD on ordered pairs of probability distributions on $[n]$:
    namely, divide by $s$ so that $\alpha/s$ and $\beta/s$ are probability distributions with rational values, and then take the limit as $s \rightarrow \infty$.
    This was the method used in~\cite{BW} to convert their recursive formula from compositions to probability distributions.
    In this case, starting with the expression in Theorem~\ref{thm:EMD exp val}, it can be shown that
    \[
    \lim_{s \rightarrow \infty} \frac{1}{s} \cdot \mathbb{E}\Big[\EMD(\alpha,\beta)\Big] = \frac{\sqrt{\pi} \, (n-1) \,\Gamma(n)}{4 \, \Gamma\!\!\left(n + \frac{1}{2}\right)}.
    \]
    This is, although somewhat disguised, equivalent to the expected value formula derived in~\cite{FV}*{Thm.~1}, namely
    \[
    \frac{2^{2n-3}(n-1)}{(2n-1)!}\,(n-1)! \, ^2,
    \]
    which was obtained analytically  by solving the recursion in~\cite{BW}.
\end{rem}

\section{Further observations regarding $N_n(t)$}
\label{sec:problems}

The following symmetric difference identity is proved in~\cite{LaHaye}*{eqn.~(26)}:
\[
    S(n) \coloneqq \sum_{\mathclap{\substack{(X,Y) \\ \in \: 2^{[n]} \times 2^{[n]}}}} | X \smtriangle Y| = n \cdot 2^{2n-1},
\]
where as usual $2^{[n]}$ denotes the power set of $[n]$.
This is the sequence A002699 in the OEIS, which also gives an alternative formula $S(n) = \sum_{k=1}^n k \binom{2n}{k}$.
The following fact has now been proved by~\cite{Ding} since our conjecture in an early draft of this note;
we remain interested, however, in finding a bijective proof of some kind, since it seems that the symmetric differences on each side can hardly be coincidental.

\begin{prop}
\label{conj N equals S}
    For all positive integers $n$, we have $N_n(1) = S(n-1)$.
\end{prop}

Real-rooted polynomials have garnered significant interest in combinatorics.
Famous examples include the Eulerian polynomials (which are the generating polynomials of the descent statistic on the symmetric group, or more generally on multiset permutations), graph matching polynomials, rook polynomials, and polynomials with interlacing or interweaving roots; we recommend the excellent survey~\cite{Branden}.
The following fact has also been proved in~\cite{Ding} since we conjectured it:

\begin{prop}
    For all positive integers $n$, the polynomial $N_n(t)$ has only real roots.
\end{prop}

As pointed out by a referee, rational functions with palindromic numerators play an important role in commutative algebra, in relation to Gorenstein rings (roughly speaking, rings that are in some sense ``self-dual'').
The key result~\cite{Stanley78}*{Thm.~4.4} is that a Cohen--Macaulay ring is a Gorenstein ring if and only if the numerator of its reduced Hilbert series is palindromic.
It would be an interesting problem to realize the rational function $H'_n(t)$, as defined in~\eqref{H prime}, as the Hilbert series of some Gorenstein ring.

 \bigskip

 \noindent \scriptsize Declarations of interest: none.

\bibliographystyle{alpha}
\bibliography{references}

\end{document}